\newtheorem{theorem}{Theorem}[section]
\newtheorem{lemma}[theorem]{Lemma}
\newtheorem{corollary}[theorem]{Corollary}
\newtheorem{conjecture}[theorem]{Conjecture}
\title{The minimum color degree and a large rainbow cycle in an edge-colored graph}
\author{Wipawee Tangjai}
\date{}
\begin{document}

\maketitle

\begin{abstract}
Let $G$ be an edge-colored graph with $n$ vertices.
A subgraph $H$ of $G$ is called a {\it rainbow subgraph} of $G$ if  the colors of each pair of the edges in $E(H)$ are distinct.
We define the {\it minimum color degree} of $G$ to be the smallest number of the
 colors of the edges that are incident to a vertex $v$, for all $v\in V(G)$.
Suppose that $G$ contains no rainbow-cycle subgraph of length four.
We show that if the minimum color degree of $G$ is at least $\frac{n+3k-2}{2}$, then $G$ contains a rainbow-cycle subgraph of length at least $k$, where $k\geq 5$.
Moreover, if the condition of $G$ is restricted to a triangle-free graph that contains a rainbow path of length at least $\frac{3k}{2}$, then the lower bound of the minimum color degree of $G$ that guarantees an existence of a rainbow-cycle subgraph of length to at least $k$ can be reduced to $\frac{2n+3k-1}{4}$.
\end{abstract}

\section{Introduction}

For a finite simple undirected graph $G$, where $G=(V(G),E(G))$ and $|V(G)|=n$, we define an edge coloring $c$ of $G$ to be a function, where $c:E(G)\rightarrow {\mathbb Z}^+$. 
For a subgraph $H$ of $G$, the edge coloring of $H$ is the restriction of $c$ to $E(H)$.
If the colors of each pair of the edges in $E(H)$ are distinct, then $H$ is called a {\it rainbow subgraph} or a {\it heterochromatic subgraph} of $G$.
The works related to rainbow subgraphs in various types including paths, trees and cycles appear in the survey by M. Kano and X. Li \cite{Kano2008}.

In this paper, we are interested in a condition of an existence of a large rainbow-cycle subgraph in $G$.
In 2005, J.J. Montellano-Ballesteros and V. Neumann-Lara \cite{Montellano-Ballesteros2005} solved the conjecture of Erd\H{o}s, Simonovits and S{\'o}s on the rainbow cycle. 
They gave a condition of the number of the colors in a complete graph $K_n$ that guarantees an existence of a rainbow-cycle subgraph.
Let $c(E(G))$ be the set of the colors of the edges appearing in $G$.
They showed that if $|c(E(K_n))|=n\left(\frac{k-2}{2}+\frac{1}{k-1}\right)+O(1)$, then $G$ contains a rainbow cycle of length at least $k$.
Meanwhile, H. J. Broesma et. al. \cite{BroersmaLiWoeZh2005} 
showed that if $|c(E(G))|\geq n$, then $G$ contains a rainbow cycle of length at least $\frac{2|c(E(G))|}{n-1}$.

In 2012, H. Li and G. Wang \cite{Li20121958} took a different approach and studied the existence of the rainbow-cycle subgraph of $G$ by considering its {\it minimum color degree} $\delta^c(G)$ of $G$, which is the smallest number of all distinct colors of the edges that are incident to a vertex $v$, for all $v\in V(G)$.
In Theorem \ref{thm: LiWang}, H. Li and G. Wang showed that, for a triangle-free graph $G$ with at least eight vertices, if $\delta^c(G)\geq \frac{3}{4}n+1$, then $G$ contains a rainbow cycle of length at least $\delta^c(G)-\frac{3}{4}n+2$.
The largest lower bound of the guaranteed length of the rainbow-cycle subgraph of $G$ in Theorem \ref{thm: LiWang} is at most $\frac{n}{4}+1$.

\begin{theorem}\cite{Li20121958}\label{thm: LiWang} Let $G$ be a triangle-free graph with $n$ vertices, where $n\geq 8$. If $\delta^c(G)\geq \frac{3}{4}n+1$, then $G$ contains a rainbow cycle of length at least $\delta^c(G)-\frac{3}{4}n+2$.
\end{theorem}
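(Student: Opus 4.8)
The plan is to argue by contradiction using the rotation–extension method on a longest rainbow path. Write $d=\delta^c(G)$ and let $k=d-\tfrac34 n+2$ be the target length, and suppose toward a contradiction that $G$ has no rainbow cycle of length at least $k$. Fix a longest rainbow path $P=v_0v_1\cdots v_p$; since $P$ is rainbow its edges carry $p$ distinct colors, forming a color set $C(P)$ with $|C(P)|=p$.

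The first step is to extract chords from the endpoint $v_0$. Because at least $d$ distinct colors meet $v_0$ while only $p$ colors lie on $P$, at least $d-p$ of the colors at $v_0$ are \emph{fresh}, i.e.\ not in $C(P)$. For each fresh color choose one incident edge $v_0x$; maximality of $P$ forces $x\in V(P)$, since otherwise $xv_0v_1\cdots v_p$ would be a longer rainbow path. Thus each fresh color yields a chord $v_0v_j$ with $2\le j\le p$, and distinct fresh colors give distinct chord-endpoints (the graph is simple). Each such chord closes the rainbow cycle $v_0v_1\cdots v_jv_0$ of length $j+1$, because its color is fresh while the remaining edges are the pairwise-distinct colors of the $P$-segment.

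The second step is to bound where these chords can sit. Under the contradiction hypothesis every fresh chord satisfies $j+1\le k-1$, so all $\ge d-p$ fresh chord-endpoints lie among $v_2,\dots,v_{k-2}$. Here triangle-freeness enters twice. First, the neighbours of $v_0$ on $P$ form an independent set, so no two of them are consecutive; in particular $v_2\notin N(v_0)$ (else $v_0v_1v_2$ is a triangle) and the fresh chord-endpoints are pairwise non-consecutive, which caps their number by roughly half the available range. Second, and more decisively, the same independence bounds the degree of $v_0$: its on-path neighbours number at most $\lceil p/2\rceil$ and its off-path neighbours at most $n-p-1$, so $d\le\deg(v_0)\le\lceil p/2\rceil+(n-p-1)$, giving an upper bound on $p$ in terms of $n$ and $d$.

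Finally I would combine the two estimates. The lower bound $|\{\text{fresh chords}\}|\ge d-p$ together with the position cap forces $p$ to be large, while the degree inequality forces $p$ to be small; with the constants in the statement these two conclusions are incompatible once $\delta^c(G)\ge\tfrac34 n+1$ and $n\ge 8$. This contradiction shows that some fresh chord must in fact reach position $j+1\ge k$, producing the desired rainbow cycle. The delicate point — and the main obstacle — is the bookkeeping in this last step: one must simultaneously track the number of fresh colors, the non-consecutive chord positions, and the slack between color-degree and ordinary degree, and (should the single-endpoint count prove too weak) repeat the analysis at $v_p$ and merge the information from both ends so that the constants collapse to exactly $k=\delta^c(G)-\tfrac34 n+2$.
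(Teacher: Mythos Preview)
The paper does not contain a proof of this theorem. Theorem~\ref{thm: LiWang} is quoted from Li and Wang \cite{Li20121958} as background; the present paper only uses it for comparison in Section~\ref{sec: discuss} and proves the distinct Theorem~\ref{thm: main} instead. There is therefore nothing in the paper to compare your proposal against.

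That said, a few remarks on the proposal itself. The rotation--extension strategy you outline is indeed the standard engine for results of this type, and the two ingredients you isolate are the right ones: (i) fresh-colored edges at an endpoint must return to the path and hence create short rainbow cycles, and (ii) triangle-freeness forces the on-path neighbourhood of an endpoint to be an independent set, which both thins out the admissible chord positions and caps $\deg(v_0)$ by roughly $p/2+(n-p-1)$. Combining $d-p\le k-4$ (from the fresh chords being confined to $\{v_3,\dots,v_{k-2}\}$) with $d\le\lceil p/2\rceil+(n-p-1)$ already gives a contradiction to $d\ge\tfrac34 n+1$, so the single-endpoint count suffices and you will not need the two-ended merge you anticipate in your last sentence. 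The genuine soft spot in your write-up is that you never justify that $d-p$ is positive, i.e.\ that fresh colors exist at all; this follows only after you have the degree bound $p\le 2(n-1-d)$, so the two steps must be presented in the right order rather than as independent estimates.
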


In 2016, R. \v{C}ada, A. Kaneko and Z. Ryj\'{a}\v{c}ek \cite{Cada20161387} gave a sufficient condition of the minimum color degree $\delta^c(G)$ of $G$ so that $G$ contains a rainbow-cycle subgraph of length at least four in Theorem \ref{thm: Cada2}.
We adopt the notations used by \v{C}ada et. al. \cite{Cada20161387}.
For a pair of vertices $u,v\in V(G)$, where $uv\in E(G)$, let $c(uv)$ be the color of the edge $uv$.
For a subgraph $H$ of $G$, the notation $c(u,H)$ is the set of the colors of the edges joining the vertex $u$ and a vertex in $H$.
Let $P$ be a path $u_1u_2\dots u_p$ and let $u_i P u_j$ be a subpath of $P$ that starts at $u_i$ and ends at $u_j$, where $ i,j\in \{1,\dots,p\}$.
Throughout this paper, we let $G$ be an edge-colored graph with an edge-coloring $c$ and $|V(G)|=n$.

\begin{theorem}\cite{Cada20161387}\label{thm: Cada2}
If $\delta^c(G)>\frac{n}{2}+2$, then $G$ contains a rainbow cycle of length at least four.
\end{theorem}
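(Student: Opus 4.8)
The plan is to argue by contradiction: assume that $G$ contains no rainbow cycle of length at least four and show that this forces $\delta^c(G)\le \frac{n}{2}+2$. The starting point is a \emph{longest} rainbow path $P=u_1u_2\cdots u_p$, with $c_i:=c(u_iu_{i+1})$ for $1\le i\le p-1$. The maximality of $P$ pins down the colors at its endpoint $u_1$ in two ways. First, for every neighbor $w\notin V(P)$ we must have $c(u_1w)\in\{c_1,\dots,c_{p-1}\}$, since otherwise $wu_1Pu_p$ would be a longer rainbow path. Second, for every chord $u_1u_i$ with $i\ge 4$ we must have $c(u_1u_i)\in\{c_1,\dots,c_{i-1}\}$, since otherwise $u_1Pu_iu_1$ would be a rainbow cycle of length $i\ge 4$. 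The only edge at $u_1$ that can carry a color outside $\{c_1,\dots,c_{p-1}\}$ is therefore $u_1u_3$ (whose cycle $u_1u_2u_3u_1$ is a mere triangle, which is permitted), so $c(u_1,G)\subseteq\{c_1,\dots,c_{p-1}\}\cup\{c(u_1u_3)\}$ and hence $\delta^c(G)\le |c(u_1,G)|\le p$. The same analysis at $u_p$ gives the symmetric conclusion. In particular $p>\frac{n}{2}+2$, so $P$ already covers more than half of $V(G)$, and each of its two endpoints realizes more than $\frac{n}{2}+1$ of the path-colors through incident edges.

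Next I would convert this long path into a rainbow cycle of length at least four by a rainbow analogue of the P\'osa rotation. Fixing the endpoint $u_p$, consider the rotations of $P$ that replace a path-edge $u_iu_{i+1}$ by a chord $u_1u_{i+1}$, producing $u_iu_{i-1}\cdots u_1u_{i+1}Pu_p$; such a rotation yields a new rainbow path of the same length when $c(u_1u_{i+1})=c_i$ (the chord constraint above rules out genuinely new colors for $i\ge 3$). Let $R$ be the set of vertices that can occur as the endpoint, in place of $u_1$, of a rainbow path on $V(P)$ with other end $u_p$, obtained by iterating admissible rotations. Using that $u_1$ realizes more than $\frac{n}{2}+1$ distinct path-colors on chords, one aims to show that $R$ is large and that each $x\in R$ again has its colors confined to the path-colors of its associated path. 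Finally, since every vertex has color degree exceeding $\frac{n}{2}+2$ while $R$ is large, some vertex of $R$ should have a neighbor that closes its path into a cycle of length at least four with a color that is fresh for that path, producing the desired rainbow cycle and the contradiction.

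The main obstacle is the control of colors under rotation. Unlike in the uncolored argument, rotating a rainbow path can destroy the rainbow property, so the delicate point is to guarantee that enough rotations remain admissible (that sufficiently many chords $u_1u_{i+1}$ satisfy $c(u_1u_{i+1})=c_i$, with a reroute available when they do not) and that $R$ stays large while every path attached to a vertex of $R$ keeps its colors essentially among its own path-colors. A convenient bookkeeping device is to track, for each endpoint and each index $i$ with both chords present, the $4$-cycle $u_1u_iu_{i+1}u_{i+2}u_1$: the no-rainbow-$C_4$ hypothesis forces a coincidence among $c(u_1u_i),c_i,c_{i+1},c(u_1u_{i+2})$, and it is exactly the two triangle-colors $c(u_1u_3)$ and $c(u_pu_{p-2})$ together with these forced coincidences that should account for the additive constant $+2$. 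I expect the proof to split into cases according to whether consecutive chords from an endpoint repeat path-colors or introduce new ones, with the fact that $u_1$ and $u_p$ share at least $2\delta^c(G)-n+2>6$ common neighbors kept in reserve as a fallback route to exhibiting a rainbow $4$-cycle directly.
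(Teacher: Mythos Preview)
The paper does not itself prove this theorem; it is quoted from \v{C}ada--Kaneko--Ryj\'a\v{c}ek, and the paper only records that Lemmas~\ref{lem: Cada1} and~\ref{lem: Cada2} are the tools used. The proof of Theorem~\ref{thm: main} is, however, explicitly ``the method in Theorem~\ref{thm: Cada2} with some generalization,'' so the intended argument can be read off from there: take a longest rainbow path $P=u_1\dots u_p$, set $A=c(u_1,u_4Pu_{p-1})$ and $B=c(u_p,u_2Pu_{p-3})$, use Lemma~\ref{lem: Cada2} to get $|A\cap B|\le 1$ and Lemma~\ref{lem: Cada1} to force $A\cup B\subseteq c(E(P))$, handle the colors from $u_1,u_p$ to $V(G)\setminus V(P)$ by maximality of $P$, and add up lower bounds for $|V(P)|$ and $|V(G)\setminus V(P)|$ to obtain $2\delta^c(G)\le n+O(1)$. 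It is a single two-endpoint counting step; there is no rotation at all.

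Your first paragraph is correct and is essentially Lemma~\ref{lem: Cada1}. The gap is everything after that. You propose a rainbow P\'osa rotation and you yourself identify the obstacle: the rotation through a chord $u_1u_{i+1}$ keeps the path rainbow only when $c(u_1u_{i+1})=c_i$, and nothing in the color-degree hypothesis says this coincidence happens for even a single index, let alone enough to make the endpoint set $R$ large. Knowing that $u_1$ sees many path-colors on chords does not place those colors on the \emph{correct} chords, so the iteration never gets off the ground as written. Your fallback via common neighbors is likewise incomplete: a common neighbor $w$ of $u_1,u_p$ gives a $4$-cycle $u_1wu_p u_{p-1}\dots$ only after further vertices are chosen, and in any case the colors on $u_1w$ and $u_pw$ may coincide or repeat path-colors. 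The missing idea is precisely Lemma~\ref{lem: Cada2}: rather than rotating, one exploits that the chord-color sets from the two ends are nearly disjoint inside $c(E(P))$, which is what makes the double count go through.
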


Lemma \ref{lem: Cada1} and \ref{lem: Cada2} are used to prove Theorem \ref{thm: Cada2}.
We will later use these lemmas to prove the main theorem.

\begin{lemma}\label{lem: Cada1}\cite{Cada20161387}
For a graph $G$, let $P=u_1\dots u_p$ be the longest rainbow path of $G$.
If $G$ contains no rainbow cycle of length at least $k$, where $k\leq p$, then
for any color $a\in c(u_1,u_kPu_p)$ and vertex $u_i\in V(u_kPu_p)$, where $c(u_1u_i)=a$, there is an edge $e\in E(u_1Pu_i)$ such that $c(e)=a$.
\end{lemma}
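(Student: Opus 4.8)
The plan is to argue by contradiction, manufacturing a long rainbow cycle out of the chord $u_1u_i$ together with an initial segment of the path $P$. Fix a color $a\in c(u_1,u_kPu_p)$ and a vertex $u_i\in V(u_kPu_p)$ with $c(u_1u_i)=a$; the hypothesis $u_i\in V(u_kPu_p)$ forces $k\leq i\leq p$, so in particular $i\geq k$. Suppose, toward a contradiction, that the conclusion fails, i.e. that no edge of the subpath $u_1Pu_i=u_1u_2\cdots u_i$ carries the color $a$.

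The key observation is that the edge $u_1u_i\in E(G)$ closes the subpath $u_1Pu_i$ into a cycle $C=u_1u_2\cdots u_iu_1$. Since $u_1,\dots,u_i$ are distinct vertices lying on a path and the chord joins its two endpoints, $C$ is a genuine (simple) cycle, and its length equals its number of edges, namely $i$. I would then verify that $C$ is rainbow: its $i-1$ path-edges $u_1u_2,\dots,u_{i-1}u_i$ receive pairwise distinct colors because $P$ is a rainbow path, while the single chord $u_iu_1$ has color $a$, which by the contradiction hypothesis coincides with none of those path-edge colors. Hence all $i$ edges of $C$ are distinctly colored.

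Because $i\geq k$, the cycle $C$ is then a rainbow cycle of length at least $k$, contradicting the standing assumption that $G$ has no rainbow cycle of length at least $k$. Consequently the color $a$ must already appear on some edge $e\in E(u_1Pu_i)$, which is exactly the asserted conclusion.

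The argument is short and presents no analytic difficulty; the only work is careful bookkeeping. One must confirm that the closed walk $u_1\cdots u_iu_1$ is a simple cycle (immediate, as the $u_j$ lie on a path and $u_1u_i$ is a chord), that its length is correctly counted as $i$ edges, and—most importantly—that the inequality $i\geq k$ is precisely what the membership $u_i\in V(u_kPu_p)$ supplies, so that the fabricated rainbow cycle is long enough to violate the no-long-rainbow-cycle hypothesis. I would also note that only the \emph{rainbow} property of $P$ is used here; its maximality, though part of the ambient setup, is not needed for this particular claim.
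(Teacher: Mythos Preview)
Your argument is correct and is exactly the natural proof: close the subpath $u_1Pu_i$ with the chord $u_1u_i$, observe that the resulting cycle has length $i\ge k$ and is rainbow under the contradiction hypothesis, and obtain the forbidden long rainbow cycle. The present paper does not supply its own proof of this lemma---it is quoted from \v{C}ada, Kaneko and Ryj\'{a}\v{c}ek---but your proof is precisely the standard one given there.
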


\begin{lemma}\label{lem: Cada2} \cite{Cada20161387}
For a graph $G$, let $P=u_1\dots u_p$ be the longest rainbow path of $G$. If $G$ contains no rainbow cycle of length at least $k$, where $k\leq p$, then for any positive integers $s$, $t$ such that $s+t=k$,
\[
|c(u_1,u_kPu_{p-(t-1)})\cap c(u_p,u_sPu_{p-(k-1)})|\leq 1.
\]
\end{lemma}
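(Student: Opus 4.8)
The plan is to argue by contradiction. Suppose $G$ has no rainbow cycle of length at least $k$, yet the displayed intersection contains two distinct colors $a$ and $b$. Membership in $c(u_1,u_kPu_{p-(t-1)})$ supplies, for each of $a$ and $b$, an edge from $u_1$ to a vertex whose index lies in $[k,p-t+1]$, and membership in $c(u_p,u_sPu_{p-(k-1)})$ supplies an edge from $u_p$ to a vertex whose index lies in $[s,p-k+1]$. Write $u_1u_{i_a},u_1u_{i_b}$ for the two edges at $u_1$ and $u_pu_{j_a},u_pu_{j_b}$ for the two edges at $u_p$, where $c$ equals $a$ on the subscript-$a$ edges and $b$ on the subscript-$b$ edges. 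The goal is to assemble two of these four chords, together with two subpaths of $P$, into a rainbow cycle of length at least $k$.

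The first real step is to locate, for each color, its repeated edge on $P$. Since $i_a,i_b\ge k$, Lemma \ref{lem: Cada1} produces an edge of color $a$ in $u_1Pu_{i_a}$ and an edge of color $b$ in $u_1Pu_{i_b}$. Applying Lemma \ref{lem: Cada1} to the reverse of $P$, and using $j_a,j_b\le p-k+1$, produces an edge of color $a$ in $u_{j_a}Pu_p$ and an edge of color $b$ in $u_{j_b}Pu_p$. Because $P$ is rainbow, each color occurs on exactly one edge of $P$; let $q_a,q_b$ be the indices with $u_{q_a}u_{q_a+1}$ and $u_{q_b}u_{q_b+1}$ these unique edges. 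The forward and reverse invocations must therefore point to the same edge, which simultaneously forces $j_a\le q_a\le i_a-1$ and $j_b\le q_b\le i_b-1$. This collapse of the two applications of Lemma \ref{lem: Cada1} onto a single edge is the heart of the argument.

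With the positions pinned down, the construction is essentially forced. Relabel so that $q_a\le q_b$, and form
\[
C:\ u_1\,u_{i_b}\,u_{i_b+1}\cdots u_p\,u_{j_a}\,u_{j_a-1}\cdots u_2\,u_1,
\]
using the chord $u_1u_{i_b}$ (color $b$), the subpath $u_{i_b}Pu_p$, the chord $u_pu_{j_a}$ (color $a$), and the subpath $u_1Pu_{j_a}$. Since $j_a\le q_a\le q_b\le i_b-1$, the two subpaths are vertex-disjoint, so $C$ is a genuine cycle, and its length $p-i_b+j_a+1\ge (t-1)+s+1=k$ by the bounds $i_b\le p-t+1$ and $j_a\ge s$. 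For rainbowness, the traversed path-edges are distinct because $P$ is rainbow, the two chords carry the distinct colors $a,b$, and the only possible clash is a reappearance of $a$ or $b$ among the traversed path-edges. This is ruled out precisely by the position bounds: $j_a\le q_a\le i_b-1$ and $j_a\le q_b\le i_b-1$ place both repeated edges in the untraversed gap $u_{j_a}Pu_{i_b}$, so neither lies on $u_1Pu_{j_a}$ nor on $u_{i_b}Pu_p$. Thus $C$ is a rainbow cycle of length at least $k$, contradicting the standing hypothesis, and the intersection can contain at most one color.

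The step I expect to be the genuine obstacle is the middle one: converting the forward and reverse forms of Lemma \ref{lem: Cada1} into the exact statement that each intersection color occupies a single, correctly situated edge of $P$, i.e. the inequalities $j_a\le q_a<i_a$ and $j_b\le q_b<i_b$. Everything afterward—choosing the cross-pairing after ordering $q_a\le q_b$, counting the length, and checking that the repeated edges fall in the gap—is bookkeeping, but it depends entirely on those two positions lining up; that alignment is what upgrades a merely long cycle to a rainbow one.
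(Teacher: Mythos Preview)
The paper does not supply its own proof of this lemma; it is quoted from \v{C}ada, Kaneko and Ryj\'{a}\v{c}ek and used as a black box in the proof of Theorem~\ref{thm: main}. Your argument is correct and is the natural one: applying Lemma~\ref{lem: Cada1} to $P$ and to its reverse pins the unique $P$-edge of each intersection color between its two chord endpoints ($j_a\le q_a<i_a$ and $j_b\le q_b<i_b$), and after ordering $q_a\le q_b$ the cross-pairing $u_1u_{i_b}$, $u_pu_{j_a}$ forces both repeated colors into the untraversed gap $u_{j_a}Pu_{i_b}$, yielding a rainbow cycle of length $p-i_b+j_a+1\ge s+t=k$.
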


From Theorem \ref{thm: Cada2}, \v{C}ada et. gave the following conjecture.

\begin{conjecture}\label{conj: Cada}
If $\delta^c(G)\geq \frac{n+k}{2}$, then $G$ contains a rainbow-cycle subgraph of lenght at least $k$. 
\end{conjecture}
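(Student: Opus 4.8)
To attempt Conjecture \ref{conj: Cada} I would argue by contradiction, assuming $\delta^c(G)\ge\frac{n+k}{2}$ while $G$ has no rainbow cycle of length at least $k$, and work with a longest rainbow path $P=u_1u_2\cdots u_p$. As in the proof of Theorem \ref{thm: Cada2}, the whole burden is then to show that the two endpoints $u_1$ and $u_p$ cannot both carry as many as $\frac{n+k}{2}$ distinct colors. Two structural facts are available at once. First, since $P$ is longest, every edge $u_1x$ with $x\notin V(P)$ has $c(u_1x)\in c(P)$, for otherwise $xu_1Pu_p$ would be a longer rainbow path; and by Lemma \ref{lem: Cada1} every chord $u_1u_j$ with $j\ge k$ also has its color in $c(P)$, since $u_1Pu_ju_1$ would otherwise be a rainbow cycle of length $j\ge k$. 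Hence the colors at $u_1$ outside $c(P)$ can occur only on the $k-2$ short chords $u_1u_2,\dots,u_1u_{k-1}$, so writing $a_1=|c(P)\cap c(u_1,V(G))|$ gives $d^c(u_1)\le a_1+(k-2)$, and symmetrically $d^c(u_p)\le a_p+(k-2)$.

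Because $a_1\le p-1$, the first inequality already forces $p\ge\frac{n-k+6}{2}$ and, dually, $a_1,a_p\ge\delta^c(G)-(k-2)\ge\frac{n-k+4}{2}$, so both endpoints must see almost all of the $p-1$ colors of $P$. (When $k\le\frac{n+6}{3}$ this also secures $p\ge k$, which is needed to apply the lemmas; the remaining range of large $k$, where the hypothesis makes $\delta^c(G)$ nearly $n-1$, I would dispose of separately.) To turn ``$u_1$ and $u_p$ both see most of $c(P)$'' into a contradiction I would apply Lemma \ref{lem: Cada2} with the balanced split $s=t=k/2$ (rounding when $k$ is odd): the two color sets $X=c(u_1,u_kPu_{p-(t-1)})$ and $Y=c(u_p,u_sPu_{p-(k-1)})$ both lie in $c(P)$ by the chord restriction above, and satisfy $|X\cap Y|\le 1$, so $|X|+|Y|\le|c(P)|+1=p$.

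The plan is to contradict this by bounding $|X|+|Y|$ from below. A color counted in $a_1$ but absent from $X$ must be carried only by an edge from $u_1$ to an off-path vertex or to one of the two end-segments $u_2,\dots,u_{k-1}$ and $u_{p-t+2},\dots,u_p$, giving $|X|\ge a_1-(n-p)-(k-2)-(t-1)$ and, symmetrically, $|Y|\ge a_p-(n-p)-(k-2)-(s-1)$. Since $(s-1)+(t-1)=k-2$, summing and comparing with $|X|+|Y|\le p$ produces $2\delta^c(G)\le 2n-p+5(k-2)$. This is useful only when $p$ is close to $n$: in the extreme case of a rainbow Hamilton path ($p=n$) it reads $2\delta^c(G)\le n-1+5(k-2)$, while for the merely guaranteed $p\ge\frac{n-k+6}{2}$ it is far too weak.

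The main obstacle is that this slack is a constant multiple of $k$, whereas proving Conjecture \ref{conj: Cada} requires driving it down to $k-1$: one must deduce from the absence of a long rainbow cycle that $2\delta^c(G)\le n+k-1$. Two reductions are needed. First, the off-path term $2(n-p)$ must be removed, so that the bound $2\delta^c(G)\le n+5(k-2)$ holds for every $p$ and not only for $p$ near $n$; I would attempt this by showing that an off-path neighbor $w$ of $u_1$ with $c(u_1w)=c(u_hu_{h+1})$ can always be incorporated into a rainbow cycle of length at least $k$ through a P\'osa-type rotation of $P$ that substitutes the edge $u_1w$ for $u_hu_{h+1}$. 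Second, the constant $5(k-2)$ must be cut to $k-1$; the natural device is to apply Lemma \ref{lem: Cada2} simultaneously over all splits $s+t=k$ and average, so that the per-split end-segment loss $(s-1)+(t-1)$ is amortized rather than incurred in full. Keeping every intermediate path rainbow across a sequence of rotations --- ensuring that each swapped-in color avoids all colors currently on the path --- is the delicate step, and it is exactly this control that separates the proven threshold $\frac{n+3k-2}{2}$ (which moreover needs the extra no-rainbow-$C_4$ hypothesis) from the conjectured $\frac{n+k}{2}$.
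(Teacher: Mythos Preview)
The statement is Conjecture~\ref{conj: Cada}, which the paper does \emph{not} prove: it is presented as an open problem of \v{C}ada, Kaneko and Ryj\'{a}\v{c}ek, and the paper's main contribution (Theorem~\ref{thm: main}) establishes only the weaker threshold $\frac{n+3k-2}{2}$, and only under the additional hypothesis that $G$ has no rainbow $C_4$. There is thus no proof in the paper against which to compare your attempt.

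Your proposal is not a proof either, and you say so yourself. After the standard setup---longest rainbow path $P$, Lemmas~\ref{lem: Cada1} and~\ref{lem: Cada2} with the balanced split $s+t=k$---you arrive at $2\delta^c(G)\le 2n-p+5(k-2)$ and correctly observe that this is far too weak unless $p$ is close to $n$. The two fixes you then describe (eliminating the $2(n-p)$ term via P\'osa-type rotations, and cutting $5(k-2)$ down to $k-1$ by averaging over all splits $s+t=k$) are stated as intentions rather than carried out, and your closing sentence concedes that keeping every intermediate path rainbow through a sequence of rotations is exactly the unresolved difficulty separating the proven $\frac{n+3k-2}{2}$ from the conjectured $\frac{n+k}{2}$. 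What you have written is therefore an honest outline of an approach together with a diagnosis of where it stalls, not a proof of the conjecture. For comparison, the paper's proof of Theorem~\ref{thm: main} runs on the same longest-path/color-counting skeleton you use, but handles off-path colors through the auxiliary sets $C_0,C_1,C_2$ rather than by rotation, and it is precisely to bound the overlap of the off-path witness vertices that the rainbow-$C_4$-free hypothesis is spent; that device buys the constant $3k-2$ in place of your rough $5(k-2)$, but, as the paper itself notes in Section~\ref{sec: discuss}, still does not reach~$k$.
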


In this work, by using the method appearing in Theorem \ref{thm: Cada2}, we give a progress toward Conjecture \ref{conj: Cada}.
In our main theorem, we showed that if $G$ does not contain a rainbow cycle of length four and $\delta^C(G)\geq \frac{n+3k-2}{2}$, then $G$ contains a rainbow cycle of length at least $k$.
In Section \ref{sec: discuss}, we will discuss the result in the main theorem in comparison with Theorem \ref{thm: LiWang}.
We showed that by restricting the condition of Theorem \ref{thm: LiWang} to be rainbow-$C_4$-free, we can ignore the condition that the graph is triangle-free and the length of the guaranteed rainbow-cycle subgraph can be at least $\frac{k}{2}-\frac{n+4}{4}$ larger than in Theorem \ref{thm: LiWang}.

In order to apply Theorem \ref{thm: main}, we require an existence of a long rainbow path.
In 2014, A. Das, S. V. Subrahmanya and P. Suresh \cite{Das2014} gave a lower bound of the length of the longest rainbow path in the term of the lower bound of the minimum color degree in Theorem \ref{thm: Das2014}.

\begin{theorem}\cite{Das2014}\label{thm: Das2014}
Let $G$ be an edge-colored graph, where $\delta^c(G)\geq t$ and $t\geq 8$.
The maximum length of the rainbow paths in $G$ is at least $\left\lceil \frac{3t}{5}\right\rceil$.
\end{theorem}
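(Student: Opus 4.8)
The statement is a lower bound on the length of the longest rainbow path in terms of the minimum color degree, so my plan is to argue from a \emph{longest} rainbow path and exploit its non-extendability. Write $d^c(v)$ for the number of distinct colors on the edges at $v$, so that $d^c(v)\geq t$ for every $v$. First I would fix a longest rainbow path $P=v_0v_1\cdots v_p$ in $G$ (of length $p$, counting edges) and suppose for contradiction that $p\leq \left\lceil \frac{3t}{5}\right\rceil-1$. The single observation driving everything is that $P$ cannot be extended at either endpoint: if $v_0w$ is an edge whose color does not already occur on $P$ and $w\notin V(P)$, then $wv_0v_1\cdots v_p$ would be a longer rainbow path. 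Since $P$ carries exactly $p$ colors, the endpoint $v_0$ must see at least $t-p$ colors that are \emph{absent} from $P$, and each such color has to be realized by a chord $v_0v_i$ with $2\leq i\leq p$. Writing $I_0=\{\, i : v_0v_i \text{ is a chord whose color is absent from } P \,\}$, I obtain $|I_0|\geq t-p$, and symmetrically for the endpoint $v_p$.

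The next step is a rainbow version of the P\'osa rotation. For each $i\in I_0$, replacing the path edge $v_{i-1}v_i$ by the chord $v_0v_i$ produces $P_i=v_{i-1}v_{i-2}\cdots v_1v_0v_iv_{i+1}\cdots v_p$, which has the same length $p$ and is again rainbow, because the deleted color lies on $P$ while the added chord color does not. Hence each $P_i$ is also a longest rainbow path, now with new endpoint $v_{i-1}$. Therefore every $v_{i-1}$ with $i\in I_0$ is subject to exactly the same non-extendability constraint: each of its $\geq t-p$ colors absent from $V(P)$ must be carried by a chord landing back inside $V(P)$. Iterating the rotation, together with the symmetric analysis at $v_p$, manufactures a whole controlled family of endpoints of longest rainbow paths, each forcing roughly $t-p$ new-colored chords into the bounded vertex set $V(P)$.

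To close, I would count colors globally. For a single endpoint the trivial bound is $d^c(v_0)\leq p+(p-1)=2p-1$ — at most $p$ colors on $P$, plus at most $p-1$ new-colored chords to $v_2,\dots,v_p$ — which only yields $p\geq \frac{t+1}{2}$. To reach the coefficient $\frac{3}{5}$ I must bring in the rotated endpoints: because each rotated endpoint $v_{i-1}$ again forces its new colors onto chords of $V(P)$, and these chords interact with the chords already located at $v_0$ and at $v_p$, one can show that the new-colored chords cannot all be as plentiful as $p-1$ simultaneously across the relevant endpoints. Quantifying this mutual restriction and optimizing over the possible configurations is what improves the coefficient from $\frac12$ to $\frac35$ and produces the contradiction with $p\leq\left\lceil\frac{3t}{5}\right\rceil-1$.

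The hard part is precisely this last counting step, not the rotation setup, which is routine. The difficulty is that a single edge can serve as the forced chord for several different rotated endpoints, so a naive sum badly overcounts. One must track the interval structure of the rotation endpoints — the indices $i-1$ for $i\in I_0$ form a restricted subset of $\{1,\dots,p-1\}$ — and argue that two rotations cannot both ``use'' a given position without either creating a still-longer rainbow path or producing a forbidden configuration. Getting these overlaps under control is what forces the careful case analysis, and it is where the hypothesis $t\geq 8$ is needed, to rule out the small-path degeneracies in which the interval and overlap estimates would otherwise fail.
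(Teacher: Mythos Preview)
The paper does not contain a proof of this theorem. It is quoted from \cite{Das2014} purely as background (and is in fact superseded in the paper by the stronger Theorem~\ref{thm: Chen2016} of Chen and Li, which is what the paper actually applies). So there is no ``paper's own proof'' to compare your proposal against.

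On the proposal itself: the rotation framework you set up is correct and is indeed the engine behind results of this type. Your observation that any color at $v_0$ not on $P$ must be realized by a chord to some $v_i$ with $2\le i\le p$, and that swapping in such a chord produces another longest rainbow path with a new endpoint $v_{i-1}$, is exactly the right starting point. However, what you have written is not a proof but a plan: the entire content of the $\tfrac{3}{5}$ bound lives in the ``hard part'' you explicitly defer. You assert that the chord systems at the various rotated endpoints ``interact'' and that ``one can show'' the chords cannot all be as plentiful as $p-1$ simultaneously, but you do not carry out any of this counting, nor do you state a concrete lemma or inequality that would yield the constant $\tfrac35$. As you yourself note, the overcounting issue (a single edge serving many rotated endpoints) is the whole difficulty, and nothing in the proposal addresses it. So as it stands the argument establishes only the trivial $p\ge \tfrac{t+1}{2}$ bound; the step that would lift it to $\left\lceil \tfrac{3t}{5}\right\rceil$ is missing, not merely abbreviated.
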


In 2016, H. Chen and X. Li gave a larger lower bound of the length of the longest rainbow path with respect to the lower bound of $\delta^c(G)$.

\begin{theorem}\cite{Chen2016}\label{thm: Chen2016}
Let $G$ be an edge-colored graph. If $\delta^c(G)\geq t\geq 7$, then $G$ contains a rainbow path of length at least $\left\lceil\frac{2t}{3}\right\rceil+1$.
\end{theorem}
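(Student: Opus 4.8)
The plan is to argue through a \emph{longest} rainbow path together with a rotation (P\'osa-type) argument. Let $P=v_1v_2\cdots v_p$ be a longest rainbow path in $G$, so its length is $p-1$; I will show $p-1\ge\lceil 2t/3\rceil+1$ by assuming $p$ is smaller and deriving a contradiction. Write $C_P=c(E(P))$ for the set of $p-1$ colors appearing on $P$, and note that $|c(v_1,G)|\ge\delta^c(G)\ge t$ and likewise at $v_p$.

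First I would record the basic extension constraint at the two endpoints. If $v_1$ has a neighbor $u\notin V(P)$ with $c(v_1u)\notin C_P$, then $uv_1\cdots v_p$ is a longer rainbow path, contradicting maximality; the same holds at $v_p$. Hence every color at $v_1$ realized by an edge to an off-path vertex already lies in $C_P$, and the only colors at $v_1$ lying outside $C_P$ can come from chords $v_1v_i$ with $3\le i\le p$. This already yields the crude estimate $t\le|c(v_1,G)|\le|C_P|+(p-2)<2p$, i.e. a bound of the form $p\gtrsim t/2$, which has the right shape but the wrong constant.

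To push the constant from $1/2$ up to $2/3$ I would invoke rotations. If a chord $v_1v_i$ has a \emph{fresh} color (one not in $C_P$), reversing the initial segment produces the path $v_{i-1}v_{i-2}\cdots v_1v_iv_{i+1}\cdots v_p$, which is again a longest rainbow path but now has the new endpoint $v_{i-1}$; here one must check that exchanging the removed edge $v_{i-1}v_i$ for the fresh-colored chord $v_1v_i$ keeps the path rainbow. Applying the endpoint constraint of the previous paragraph to each rotated path forces additional structure deep inside $P$, and by iterating the rotation one collects a set of vertices that serve as endpoints of longest rainbow paths. The key step is then a charging scheme in which the colors counted at $v_1$, at $v_p$, and at these rotated endpoints are attributed to pairwise-disjoint edges (equivalently disjoint color classes) of $P$; a tight version of this accounting upgrades the crude $t\lesssim 2p$ to an inequality of the form $2t\lesssim 3(p-1)$, which rearranges to $p-1\ge\lceil 2t/3\rceil+1$.

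The main obstacle, as always in rotation arguments, is the bookkeeping: I must guarantee that every rotated path remains rainbow (so the exchanged chord color must be genuinely outside $C_P$, not merely outside the reversed segment), and that the colors I count at the various endpoints are charged to disjoint portions of $P$ so that none is counted twice. The hypothesis $t\ge 7$ should be needed only to absorb the rounding hidden in the ceiling and to clear a handful of small-$p$ base cases; the substantive content is the tight exchange/counting step that produces exactly the factor $2/3$.
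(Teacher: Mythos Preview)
This theorem is not proved in the present paper at all: it is quoted from \cite{Chen2016} and used as a black box (to guarantee that the longest rainbow path $P$ in the proof of Theorem~\ref{thm: main} satisfies $p\ge 2k$). There is therefore no ``paper's own proof'' to compare your proposal against.

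As for your proposal on its own merits, it is an outline rather than a proof. The decisive step---upgrading the trivial inequality $t<2p$ to one of the form $2t\lesssim 3(p-1)$ via a charging scheme over rotated endpoints---is precisely the content of the Chen--Li argument, and you have asserted it rather than carried it out. In particular, after a rotation the color set of the new longest rainbow path is $C_P\setminus\{c(v_{i-1}v_i)\}\cup\{c(v_1v_i)\}$, which differs from $C_P$; the endpoint constraints at $v_{i-1}$ therefore refer to a \emph{different} forbidden color set, and your ``attributed to pairwise-disjoint edges of $P$'' claim needs a concrete mechanism to handle this shifting. Without that mechanism and the explicit inequality chain, nothing here distinguishes the $2/3$ constant from the $3/5$ constant of Theorem~\ref{thm: Das2014} or from the trivial $1/2$.
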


\section{Main Results}

Theorem \ref{thm: main} is obtained by using the method in Theorem \ref{thm: Cada2} with some generalization.
As a result, Theorem \ref{thm: main} gives a lower bound of $\delta^c(G)$ guaranteeing a rainbow-cycle subgraph of length at least $k$, where $k\geq 5$, in a graph containing no rainbow-cycle subgraph of length four.
The largest length of the guaranteed rainbow-cycle subgraph of $G$ in Theorem \ref{thm: LiWang} is at most $\frac{n}{4}+1$, while, it is $\frac{n+4}{3}$ in
Theorem \ref{thm: main}.

\begin{theorem}\label{thm: main}
Let $G$ be a graph with no rainbow-cycle subgraph of length four.
If $\delta^C(G)\geq \frac{n+3k-2}{2}$, then $G$ contains a rainbow-cycle subgraph of length at least $k$, where $k\geq 5$.
\end{theorem}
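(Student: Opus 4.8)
The plan is to argue by contradiction: assume $G$ has no rainbow cycle of length at least $k$, and let $P=u_1\cdots u_p$ be a longest rainbow path in $G$. Since $\delta^c(G)\geq\frac{n+3k-2}{2}\geq 7$ (using $k\geq 5$), Theorem~\ref{thm: Chen2016} produces a long rainbow path, and a short computation shows $p\geq k$, so that the indices appearing in Lemma~\ref{lem: Cada1} and Lemma~\ref{lem: Cada2} are meaningful. Throughout I would exploit path-maximality: any edge from $u_1$ (or from $u_p$) to a vertex off $P$ must carry a color already present on $P$, since otherwise that edge would extend $P$ to a longer rainbow path.

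Next I would establish a strong lower bound on $p$. Call a color at $u_1$ \emph{new} if it does not appear on $P$. Off-path edges at $u_1$ contribute only path colors, and by Lemma~\ref{lem: Cada1} every chord $u_1u_j$ with $j\geq k$ repeats a color of $P$ as well; hence a new color can arise only from a chord $u_1u_j$ with $3\leq j\leq k-1$. The hypothesis that $G$ has no rainbow $C_4$ eliminates the case $j=4$, since $u_1u_2u_3u_4u_1$ would otherwise be a rainbow four-cycle, forcing $c(u_1u_4)$ to repeat a path color. Thus $u_1$ carries at most $k-4$ new colors, so the number of path colors at $u_1$ is at least $\delta^c(G)-(k-4)$; as $P$ has only $p-1$ colors, this yields $p\geq\delta^c(G)-k+5\geq\frac{n+k+8}{2}$, and in particular $n-p\leq\frac{n-k-8}{2}$.

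I would then set up the final count via Lemma~\ref{lem: Cada2}. Fixing $s+t=k$ and writing $A=c(u_1,u_kPu_{p-(t-1)})$ and $B=c(u_p,u_sPu_{p-(k-1)})$, Lemma~\ref{lem: Cada2} gives $|A\cap B|\leq 1$, while Lemma~\ref{lem: Cada1} applied at $u_1$ and, on the reversed path, at $u_p$ shows $A,B\subseteq c(P)$; hence $|A|+|B|=|A\cup B|+|A\cap B|\leq(p-1)+1=p$. In the other direction, the colors at $u_1$ that avoid $A$ are realized only by edges to the two short end-segments of $P$ and to off-path vertices, and symmetrically for $B$; counting these gives lower bounds whose contributions from the end-segments sum to $3(k-2)$, so that $|A|+|B|$ is, up to the off-path terms, at least $2\delta^c(G)-3(k-2)\geq n+4$. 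Since $n+4>p$ for $p\leq n$, this contradicts $|A|+|B|\leq p$.

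The main obstacle is controlling edges from the endpoints to off-path vertices. Charged naively, the $n-p$ off-path colors at each endpoint inflate the deficit and break the inequality; the clean bound $|A|+|B|\geq n+4$ survives only if these colors are not double-counted. The crux of the argument will therefore be to show, using path-maximality together with the no-rainbow-$C_4$ hypothesis, that the off-path colors at the endpoints already lie among the colors counted on $P$ (equivalently, that they may be charged at most once across the two endpoints). This is exactly where the absence of a rainbow $C_4$ does essential work, and it is consistent with the extremal length $\frac{n+4}{3}$ recorded after the statement.
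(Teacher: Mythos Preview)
Your setup matches the paper: longest rainbow path $P$, the sets $A,B$, Lemma~\ref{lem: Cada1} and Lemma~\ref{lem: Cada2}, and the observation that $A\cup B\subseteq c(E(P))$ with $|A\cap B|\le 1$. Your side remark that the no-rainbow-$C_4$ hypothesis kills the chord $u_1u_4$ and hence gives $p\ge\delta^c(G)-k+5$ is correct and actually sharper than the bound $p\ge 2k$ the paper records, but the paper never needs this and it does not close your argument.

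The genuine gap is in your last paragraph. You propose to finish by comparing $|A|+|B|$ against $p$, and you acknowledge that the naive off-path loss $2(n-p)$ is fatal; but your proposed repair, that the off-path colors ``already lie among the colors counted on $P$'' and hence ``may be charged at most once across the two endpoints'', is not a valid inference. That off-path colors lie in $c(E(P))$ is automatic from maximality and buys nothing: it does not force the off-path color sets at $u_1$ and at $u_p$ to be disjoint, nor does it relate their union to $n-p$. With only your inequalities one gets $|A|+|B|\ge 2p-n+4$, and for the contradiction $2p-n+4>p$ you would need $p>n-4$, which your bound $p\ge\frac{n+k+8}{2}$ gives only for $k\le 7$.

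The paper resolves the off-path issue differently, and not by comparing $|A|+|B|$ to $p$. It partitions the off-path colors at the endpoints into $C_0=(c(u_1,P^C)\setminus c(u_1,P))\cap(c(u_p,P^C)\setminus c(u_p,P))$ and residual sets $C_1,C_2$, and then bounds $|V(P)|$ and $|V(P^C)|$ separately: $|V(P)|\ge |A|+|B|+|C_0|$ (since $A,B,C_0$ are disjoint subsets of $c(E(P))$), while $|V(P^C)|\ge |C_0|+|C_1|+|C_2|-1$. The latter is where the no-rainbow-$C_4$ hypothesis is actually used: pick one off-path representative vertex for each color in $C_1$ (a neighbor of $u_1$) and each color in $C_2$ (a neighbor of $u_p$); if two of these representatives coincided, the four edges $u_1v,\,vu_p,\,u_pw,\,wu_1$ would form a rainbow $C_4$ because $C_1\cap C_2=\emptyset$. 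Adding the two vertex bounds and plugging in $|A|+|C_0|+|C_1|\ge \delta^c(G)-k-t+O(1)$ and the symmetric bound for $B$ yields $n=|V(P)|+|V(P^C)|>n$. So the crucial application of ``no rainbow $C_4$'' is to off-path \emph{vertices} shared by the two endpoints, not to the chord $u_1u_4$; your outline misses this mechanism.
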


\begin{proof}
For a fixed $k\geq 5$.
Suppose that $G$ contains no rainbow-cycle subgraph of length at least $k$.
Since $|V(G)|\geq \delta^c+1$, it follows that $n\geq 3k$.
Let $P=u_1u_2\dots u_p$ be the longest rainbow path in $G$.
By Theorem \ref{thm: Chen2016}, it follows that 
\[
p\geq \left\lceil\frac{(n+3k-4)}{3}\right\rceil+1 \geq 2k.
\]
Let $s,t\in {\mathbb N}$ be such that $s=\left\lfloor \frac{k}{2}\right\rfloor $ and $t=\left\lceil \frac{k}{2}\right\rceil$.
So $p-(t-1)>k$ and $p-(k-1)>s$.
Let $A=c(u_1,u_kPu_{p-(t-1)})$ and $B=c(u_p,u_sPu_{p-(k-1)})$.
By Lemma \ref{lem: Cada2}, we have $|A\cap B|\leq 1$.
Let $P^C$ be the subgraph of $G$ induced by $V(G)-V(P)$ and let
\[
C_0=(c(u_1,P^C)\setminus c(u_1,P))\cap (c(u_p,P^C)\setminus c(u_p,P)).
\]
So $C_0\cap (A\cup B)=\emptyset$.
By Lemma \ref{lem: Cada1}, if $a\in A\cup B$, then $a\in c(E(P))$.
If $a\in C_0$, then $a\in c(E(P))$; otherwise, there exists a longer rainbow path.

By Lemma \ref{lem: Cada1}, if $c(u_1u_2)\in B$, then there exists an edge $e\in E(u_s P u_{p})$ such that $c(e)=c(u_1u_2)$; hence, the path $P$ is not rainbow which is a contradiction.
So, $c(u_1u_2)\not\in B$ and, similarly, $c(u_{p-1}u_p)\not\in A$.
Since $P$ is rainbow and $G$ contains no rainbow-cycle subgraph of length at least $k$, it follows that if $u_1u_p\in E(G)$, then $c(u_1u_p)\in c(E(P))$.
Let 
\begin{align*}
\epsilon_1&=
\begin{cases}
1, &\text{ if } c(u_1u_2)\not\in A,\\
0, &\text{ otherwise},
\end{cases}
\\
\epsilon_2&=
\begin{cases}
1, &\text{ if } c(u_{p-1}u_p)\not\in B,\\
0, &\text{ otherwise,}
\end{cases}
\\
\epsilon_1'&=
\begin{cases}
1, &\text{ if } c(u_1u_p)\not\in A\cup c(u_1u_2),\\
0, &\text{ otherwise,}
\end{cases}
\\
\epsilon_2'&=
\begin{cases}
1, &\text{ if } c(u_1u_p)\not\in B\cup c(u_pu_{p-1}),\\
0, &\text{ otherwise.}
\end{cases}
\end{align*}
So,
\begin{align}
|V(P)|&=|E(P)|+1\notag\\
&\geq |c(E(P))|+1\notag\\
&\geq |A\cup B|+|C_0|+\epsilon_1+\epsilon_2+\epsilon_1'\epsilon_2'+1\notag\\
&\geq |A|+|B|+|C_0|+\epsilon_1+\epsilon_2+\epsilon_1'\epsilon_2'.\label{eq: VP}
\end{align}

Let $C_1=c(u_1,P^C)\setminus (C_0\cup c(u_1,P))$ and $C_2=c(u_p,P^C)\setminus (C_0\cup c(u_p,P))$.
For each $a\in C_1\cup C_2$, we have $a\in c(E(P))$; otherwise, there exists a longer rainbow path.
By the construction of $C_0,C_1$ and $C_2$, it follows that
\[
C_1\cap C_2 =((c(u_1,P^C)\setminus c(u_1,P))\cap (c(u_1,P^C)\setminus c(u_p,P)))\setminus C_0=\emptyset.
\]

Suppose $C_1=\{c_1,\dots, c_{|C_1|}\}$ and $C_2=\{c_1',\dots, c_{|C_1|}'\}$.
Let $X_{c_i}$ be a subset of $N_{P^C}(u_1)$, where $X_{c_i}=\{v\in N_{P^C}(u_1): c(u_1v)=c_i\}$, for all $i\in \{1,\dots, |C_1|\}$.
Similarly, let $X_{c_j'}$ be a subset of $N_{P^C}(u_p)$, where $X_{c_j'}=\{v\in N_{P^C}(u_p): c(u_pv)=c_j'\}$, for all $j\in \{1,\dots, |C_2|\}$.
Next, we choose one vertex $x_i$ from each $X_{c_i}$ and one vertex $y_j$ from $X_{c_j'}$, where $i\in \{1,\dots, |C_1|\}$ and $j\in \{1,\dots, |C_2|\}$.
We have $\{x_1,\dots,x_{|C_1|}\}$ and $\{y_1,\dots,y_{|C_2|}\}$.
Since  $C_1\cap C_2=\emptyset$ and $G$ contains no rainbow-cycle subgraph of length four, it follows that 
\[
|\{x_1,\dots,x_{|C_1|}\}\cap \{y_1,\dots,y_{|C_2|}\}|\leq 1.
\]
So,
\begin{equation}\label{eq: VPC}
|V(P^C)|\geq |C_0|+|C_1|+|C_2|-1.
\end{equation}

We note that $|c(u_1,P)|\leq p-1$ and $|A\cup \{c(u_1u_2)\}|\leq p-t-k+3$.
Suppose $|A\cup \{c(u_1u_2)\}|=l$ and $l'=(p-t-k+3)-l$.
It follows that the number of the colors in $c(u_1,P)$ is also less than $|E(P)|$ by at least $l'$.
So, 
\[
|c(u_1,P)|\leq (p-1)-l'=t+k-4+l.
\]
Hence,
\begin{align}\label{eq: bound_cu1}
|c(u_1,P)\setminus A\cup \{c(u_1u_2)\}|
&=|c(u_1,P)|-|A\cup \{c(u_1u_2)\}|\notag\\
&\leq (t+k-4+l)-l\notag\\
&=t+k-4.
\end{align}
Analogously, since $|B\cup \{c(u_{p-1}u_p)\}|\leq p-s-k+2$, it follows that 
\begin{equation}\label{eq: bound_cu2}
|c(u_p,P)\setminus (B\cup \{c(u_{p-1}u_p\})|\leq s+k-3.
\end{equation}
Next, we consider the number of the colors of the edges that are incident to $u_1$.
Since, $A,\,C_0,\,C_1$ and $c(u_1,P)\setminus A\cup \{c(u_1u_2)\}$ are all disjoint, it follows that
\begin{equation}
|A|+|C_0|+|C_1|+\epsilon_1+\epsilon_1'+(k+t-4)\geq d^c(u_1)\geq \delta^c(G).
\end{equation}
So, if we omit $\epsilon_1'$, then
\begin{equation}\label{eq: u1}
|A|+|C_0|+|C_1|+\epsilon_1\geq \delta^c(G)-k-t+3.
\end{equation}
Similarly,
\[
|B|+|C_0|+|C_2|+\epsilon_2+\epsilon_2'+k+s-3 \geq d^c(u_p)\geq  \delta^c(G).
\]
Hence, if we omit $\epsilon_2'$, then
\begin{equation}\label{eq: u2}
|B|+|C_0|+|C_2|+\epsilon_2\geq \delta^c(G)-k-s+2.
\end{equation}
By (\ref{eq: VP}), (\ref{eq: VPC}), (\ref{eq: u1}) and (\ref{eq: u2}),
\begin{align*}
|V(P)|+|V(P^C)|
&\geq (|A|+|C_0|+|C_1|+\epsilon_1)+(|B|+|C_0|+|C_2|+\epsilon_2)-1+\epsilon_1'\epsilon_2'\\
&\geq (\delta^c-k-t+3)+(\delta^c-k-s+2)-1\\
&=2\delta^c-2k-t-s+4\\
&=2\delta^c-3k+4 >n, 
\end{align*}
which is a contradiction.
Therefore, $G$ contains a rainbow-cycle subgraph of length at least $k$.
\end{proof}

If $G$ is triangle-free, then  the bound of sizes of $c(u_1,P)$, $A\cup \{c(u_1u_2)\}$ and 
$B\cup \{c(u_{p-1}u_p)\}$ can be reduced to
\begin{itemize}
\item $|c(u_1,P)|\leq \left\lceil\frac{p-1}{2}\right\rceil$
\item $|A\cup \{c(u_1u_2)\}|\leq \left\lceil \frac{p-t-k+3}{2}\right\rceil$
\item $|B\cup \{c(u_{p-1}u_p)\}|\leq \left\lceil\frac{p-s-k+2}{2}\right\rceil$.
\end{itemize}
Hence, (\ref{eq: bound_cu1}) and (\ref{eq: bound_cu2}) can be reduced to 

\begin{equation}
|c(u_1,P)\setminus A\cup \{c(u_1u_2)\}|\leq \frac{t+k-2}{2}
\end{equation}

and 
\begin{equation}
|c(u_p,P)\setminus (B\cup \{c(u_{p-1}u_p\})|\leq \frac{s+k-1}{2}.
\end{equation}
Thus, if $G$ is triangle-free with no rainbow-cycle subgraph of length four, then the lower bound of $\delta^c(G)$ can be reduced to $\frac{2n+3k-1}{4}$; however, we need a condition of the existence of a rainbow path of length at least $\frac{3k}{2}$ in $G$.

\begin{theorem}\label{thm: main2}
Let $G$ be a triangle-free graph with no rainbow-cycle subgraph of length four.
If $G$ contains a rainbow path of length at least $\frac{3k}{2}$ and $\delta^C(G)\geq \frac{2n+3k-1}{4}$, then $G$ contains a rainbow cycle of length at least $k$, where $k\geq 5$.
\end{theorem}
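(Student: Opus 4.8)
The plan is to run the entire argument of Theorem~\ref{thm: main} essentially verbatim, changing only the two inputs that the triangle-free hypothesis improves. First I would suppose, for contradiction, that $G$ has no rainbow cycle of length at least $k$, and let $P=u_1\dots u_p$ be the longest rainbow path of $G$. In place of the Chen--Li bound (Theorem~\ref{thm: Chen2016}), which is too weak to be invoked under the reduced degree threshold, I would use the standing hypothesis directly: some rainbow path has length at least $\frac{3k}{2}$, so the longest one satisfies $p\geq\frac{3k}{2}$, which is exactly what is needed to guarantee $p-(t-1)>k$ and $p-(k-1)>s$ for $s=\lfloor k/2\rfloor$ and $t=\lceil k/2\rceil$. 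With $P$, $s$, $t$ fixed I would reuse the same sets $A=c(u_1,u_kPu_{p-(t-1)})$, $B=c(u_p,u_sPu_{p-(k-1)})$, $C_0,C_1,C_2$, and the same indicators $\epsilon_1,\epsilon_2,\epsilon_1',\epsilon_2'$, so that inequalities~(\ref{eq: VP}) and~(\ref{eq: VPC}) carry over without any change.

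The one genuinely new ingredient is that triangle-freeness halves the number of path-vertices a single endpoint can see. Since $G$ has no triangle, $u_1$ cannot be adjacent to two consecutive vertices $u_i,u_{i+1}$ of $P$, for that edge together with $u_1u_i$ and $u_1u_{i+1}$ would form a triangle; hence $u_1$ has at most $\lceil\frac{p-1}{2}\rceil$ neighbours on $P$, and the number of distinct colours is no larger, giving $|c(u_1,P)|\leq\lceil\frac{p-1}{2}\rceil$. Applying the same ``no two consecutive neighbours'' observation to the subpaths defining $A$ and $B$ yields $|A\cup\{c(u_1u_2)\}|\leq\lceil\frac{p-t-k+3}{2}\rceil$ and $|B\cup\{c(u_{p-1}u_p)\}|\leq\lceil\frac{p-s-k+2}{2}\rceil$. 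Subtracting exactly as in~(\ref{eq: bound_cu1}) and~(\ref{eq: bound_cu2}), I obtain the sharpened residual bounds $|c(u_1,P)\setminus(A\cup\{c(u_1u_2)\})|\leq\frac{t+k-2}{2}$ and $|c(u_p,P)\setminus(B\cup\{c(u_{p-1}u_p)\})|\leq\frac{s+k-1}{2}$.

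With these in hand I would feed the residual bounds into the colour-degree count at $u_1$ and at $u_p$, obtaining the analogues of~(\ref{eq: u1}) and~(\ref{eq: u2}) with the term $k+t-4$ replaced by $\frac{t+k-2}{2}$ and $k+s-3$ replaced by $\frac{s+k-1}{2}$. Combining these with~(\ref{eq: VP}) and~(\ref{eq: VPC}) and using $s+t=k$ collapses the right-hand side of the chain $|V(P)|+|V(P^C)|\geq\dots$ to a linear expression in $\delta^c$ and $k$; the threshold $\delta^c(G)\geq\frac{2n+3k-1}{4}$ is precisely the value that forces $|V(P)|+|V(P^C)|>n$, which is the desired contradiction.

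I expect the bookkeeping around the halving to be the main obstacle. One must check that the triangle-free ``every other vertex'' estimate applies cleanly not only to the whole of $c(u_1,P)$ but also to the restricted colour sets $A$ and $B$ on their respective subpaths, and that passing from a count of neighbours to a count of distinct colours (which is only an inequality, since several neighbours may repeat a colour) does not disturb any of the disjointness used to assemble the degree inequality. The off-by-one issues in the ceilings and in the handling of $\epsilon_1',\epsilon_2'$ also demand care, but they are routine once the two sharpened residual bounds above are established.
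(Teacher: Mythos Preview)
Your proposal is correct and follows essentially the same approach as the paper. The paper does not give a separate formal proof of Theorem~\ref{thm: main2}; instead, immediately before stating it, the paper sketches exactly the modification you describe---replacing the Chen--Li path-length bound by the standing hypothesis $p\geq\frac{3k}{2}$, and using triangle-freeness to halve the three bounds on $|c(u_1,P)|$, $|A\cup\{c(u_1u_2)\}|$, and $|B\cup\{c(u_{p-1}u_p)\}|$---leading to the sharpened residual bounds $\frac{t+k-2}{2}$ and $\frac{s+k-1}{2}$ and thence to the threshold $\frac{2n+3k-1}{4}$.
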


We note that if we omit the condition of the length of the longest rainbow path in Theorem \ref{thm: main2}, then the condition of only the minimum degree is not able to guarantee the existence of the needed rainbow path.
To omit such condition, we combine Theorem \ref{thm: Chen2016} and Theorem \ref{thm: main} which result to Corollary \ref{cor: main} as follows.

\begin{corollary}\label{cor: main}
If $n\geq 3k+1$ and $\delta^c(G)\geq \frac{2n+3k-1}{4}$, then $G$ contains a rainbow cycle of length at least $k$.

\end{corollary}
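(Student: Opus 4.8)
The plan is to reduce Corollary \ref{cor: main} to Theorem \ref{thm: main2} by discharging its one remaining hypothesis --- the existence of a rainbow path of length at least $\frac{3k}{2}$ --- using nothing but the color-degree bound. The threshold $\frac{2n+3k-1}{4}$ is exactly the one established for triangle-free, rainbow-$C_4$-free graphs in Theorem \ref{thm: main2}, so we work under those structural assumptions; the only extra ingredient that theorem demands is the long rainbow path, and that is precisely what Theorem \ref{thm: Chen2016} supplies from $\delta^c(G)$ alone.

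First I would collapse the two numerical hypotheses into a single clean lower bound on the color degree. Combining $\delta^c(G) \geq \frac{2n+3k-1}{4}$ with $n \geq 3k+1$ gives
\[
\delta^c(G) \geq \frac{2(3k+1)+3k-1}{4} = \frac{9k+1}{4},
\]
and for $k \geq 5$ this exceeds $7$, so Theorem \ref{thm: Chen2016} applies with $t = \delta^c(G)$ and yields a rainbow path of length at least $\left\lceil \frac{2t}{3}\right\rceil + 1$.

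It then remains to verify that this guaranteed length reaches $\frac{3k}{2}$. Using $\left\lceil \frac{2t}{3}\right\rceil \geq \frac{2t}{3}$ together with $t \geq \frac{9k+1}{4}$,
\[
\left\lceil \frac{2t}{3}\right\rceil + 1 \geq \frac{2t}{3} + 1 \geq \frac{9k+1}{6} + 1 = \frac{3k}{2} + \frac{7}{6} > \frac{3k}{2},
\]
so the path is long enough and Theorem \ref{thm: main2} immediately delivers a rainbow cycle of length at least $k$. Observe that the lower bound $\frac{2n+3k-1}{4}$ on $\delta^c(G)$ is increasing in $n$, so the weakest instance --- the one actually controlling the argument --- is $n = 3k+1$; for larger $n$ the slack only widens.

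I expect the main, and essentially the only, obstacle to be the bookkeeping with the ceiling term and confirming the inequality chain survives at the tight end $n = 3k+1$ for every $k \geq 5$. The conceptual point worth flagging is that the rainbow-path hypothesis of Theorem \ref{thm: main2} genuinely cannot be dropped for free: it is forced here only because the gap between $\frac{2}{3}\cdot\frac{9k+1}{4} + 1$ and $\frac{3k}{2}$ stays positive, and a weaker degree bound would break exactly this step.
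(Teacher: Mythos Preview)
Your approach matches the paper's: apply Theorem~\ref{thm: Chen2016} to the color-degree bound to produce a rainbow path longer than $\tfrac{3k}{2}$, then invoke the triangle-free result to obtain the cycle. The paper's two-line proof actually cites Theorem~\ref{thm: main} in its final step, which appears to be a typo for Theorem~\ref{thm: main2} (the degree threshold $\tfrac{2n+3k-1}{4}$ falls short of $\tfrac{n+3k-2}{2}$ for $k\geq 2$); you correctly target Theorem~\ref{thm: main2} and rightly flag the implicit triangle-free and rainbow-$C_4$-free hypotheses that the corollary's statement omits.
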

\begin{proof}
By Theorem \ref{thm: Chen2016}, there exists a path $u_1\dots u_p$, where 
\[
p\geq \left\lceil\frac{2n+3k-1}{6}\right\rceil+1 > \frac{3k}{2}.
\]
Hence, by Theorem \ref{thm: main}, there exists a rainbow-cycle subgraph of length at least $k$.
\end{proof}
\section{Discussion}\label{sec: discuss}
In this section, we compare the length of the rainbow-cycle subgraphs guaranteed by Theorem \ref{thm: LiWang} and Theorem \ref{thm: main}.
We consider a graph $G$ satisfying the conditions in both theorems.
The maximum length of the guaranteed rainbow-cycle subgraph in Theorem \ref{thm: LiWang} is at most $\frac{n}{4}+1$, whereas, the guaranteed length of the rainbow-cycle subgraph of $G$ in Theorem \ref{thm: main} can be up to $\frac{n}{3}$.
Let $k\in {\mathbb N}$ be such that $\delta^c(G)=\left\lceil\frac{n+3k-2}{2}\right\rceil$.
Theorem \ref{thm: main} implies that $G$ contains a rainbow-cycle subgraph of length at least $k$.
Next, we show that the guaranteed length of the rainbow-cycle subgraph obtained by Theorem \ref{thm: LiWang} of such graph is less than $k$.
Since $\delta^c(G)=\left\lceil\frac{n+3k-2}{2}\right\rceil$, it follows that 
 \[
\delta^c(G)=\frac{n+3k-1}{2} \text{ or } \delta^c(G)= \frac{n+3k-2}{2}.
\]
So,
\[
 \delta^c(G)=\left(\frac{3n}{4}+1\right)+\frac{6k-n-6}{4} \text{ or } \delta^c(G)= \left(\frac{3n}{4}+1\right)+\frac{6k-n-8}{4}.
\]

We note that if $k<\frac{n+6}{6}$, then Theorem \ref{thm: LiWang} is not applicable.
We consider case $k\geq \frac{n+6}{6}$.
By Theorem \ref{thm: LiWang}, $G$ contains a rainbow cycle of length at least $\delta^c(G)-\frac{3}{4}n+2$, which is either 
\[
k+\left(\frac{k}{2}-\frac{n+6}{4}\right) \text{ or } k+\left(\frac{k}{2}-\frac{n+4}{4}\right),
\]
with respect to $\delta^c(G)$. 
In order to guarantee a larger length of a rainbow cycle in $G$, the value of $k$ in Theorem \ref{thm: LiWang} has to be larger than $\frac{n}{2}$, and hence, $\delta^c(G)-\frac{3}{4}n+2>\frac{n}{2}$  which is not possible, because the maximum guaranteed length of the rainbow cycle from Theorem \ref{thm: LiWang} is at most $\frac{n}{4}-1$.
Therefore, Theorem \ref{thm: main} guarantees an existence of a larger length of a rainbow-cycle subgraph in $G$ by at least $\frac{n+4}{4}-\frac{k}{2}$.
However, in order to apply  Theorem \ref{thm: main}, the graph $G$ cannot contain a rainbow cycle of length four, whereas, this condition is not necessary in Theorem \ref{thm: LiWang}.

The result in Theorem \ref{thm: main} is a progress toward the Conjecture \ref{conj: Cada} given by R. \v{C}ada, A.Kaneko and Z. Ryj\'{a}\v{c}ek \cite{Cada20161387}.
However, the lower bound of $\delta^c(G)$ is still larger than the conjectured bound. 
We also note that Theorem \ref{thm: main} is not a generalization of Theorem \ref{thm: Cada2} because of the exclusion of the rainbow-cycle subgraph of length four.

\section{Acknowledgement}
I would like to thank Kittikorn Nakprasit for a suggestion on this paper and Chokchai Viriyapong for helping with LaTeX technicality.
This research was financially supported by National Science and Technology Development Agency of Thailand with grant number: SCH-NR2016-531.

\bibliographystyle{plain}
\bibliography{science}

\end{document}